\documentclass[reqno,10pt]{amsart}
\usepackage{color}
\usepackage{amssymb,amsmath,amsthm,amstext,amsfonts}
\usepackage[dvipdfmx]{graphicx}
\usepackage{psfrag}

\pagestyle{plain} \pagenumbering{arabic}

\makeatletter \@addtoreset{equation}{section} \makeatother

\renewcommand\thetable{\thesection.\@arabic\c@table}

\theoremstyle{plain}
\newtheorem{maintheorem}{Theorem}

\newtheorem{maincorollary}{Corollary}

\theoremstyle{definition} \theoremstyle{remark}

\newcommand{\vep}{\varepsilon}

\newcommand{\cF}{\mathcal{F}}

\theoremstyle{remark}
\newtheorem{defi}{\textup{Definition}}[section]

\theoremstyle{definition}
\newtheorem{thm}[defi]{\textbf{Theorem}}
\newtheorem{lem}[defi]{\textbf{Lemma}}
\newtheorem{prop}[defi]{\textbf{Proposition}}
\newtheorem{cor}[defi]{\textbf{Corollary}}

\theoremstyle{plain}

\makeatletter
\@addtoreset{equation}{section}

\begin{document}

\title{Partial hyperbolicity and specification}
\date{\today}

\author[N. Sumi]{Naoya Sumi}
\address{Department of Mathematics, Faculty of Science \\
Kumamoto University \\
2-39-1 Kurokami, Kumamoto-shi, Kumamoto, 860-8555, JAPAN
}
\email{sumi@sci.kumamoto-u.ac.jp}

\author[P. Varandas]{Paulo Varandas}
\address{Departamento de Mathem\'atica \\
Universidade Federal da Bahia \\
Ademar de barros S/N, 20170-110 Sulvador, Brazil
}
\email{paulo.varandas@ufba.br}
\urladdr{http://www.pgmat.ufba.br/varandas}

\author[K. Yamamoto]{Kenichiro Yamamoto}
\address{School of Information Environment \\
Tokyo Denki University \\
2-1200 Muzaigakuendai, Inzai-shi, Chiba 270-1382, JAPAN
}
\email{yamamoto.k.ak@sie.dendai.ac.jp}
\urladdr{http://www.math.sie.dendai.ac.jp/~yamamoto.k.ak/index-e.html}

\maketitle
 \begin{abstract} 
We study the specification property for partially hyperbolic dynamical systems.
In particular, we show that if a partially hyperbolic diffeomorphism has two saddles with different
indices, and stable manifold of one of them coincides with
the strongly stable leaf, then it does not satisfy the specification property.
As an application, we prove that there exists a  $C^1$-open and dense subset $\mathcal P$ in the set of
robustly non-hyperbolic transitive diffeomorphisms on a three dimensional closed manifold such  that diffeomorphisms in $\mathcal P$ do not satisfy the specification property. 
 \end{abstract}

\section{Introduction and statement of the main results}

Let $(X,d)$ be a compact metric space, and let $f\colon X\to X$ be a homeomorphism.
We say that $f$ satisfies the \textit{specification property} if
for each $\varepsilon>0$, there is an integer $N(\varepsilon)$ for which the following is true:
if $I_1,I_2,\cdots,I_k$
are pairwise disjoint intervals of integers with
$$\min\{|m-n|:m\in I_i,n\in I_j\}\ge N(\varepsilon)$$
for $i\not=j$ and
$x_1,\cdots,x_k \in X$ then there is a point $x\in X$ such that
$d(f^j(x),f^j(x_i))\le\varepsilon$ for $j\in I_i$ and $1\le i\le k$.
This property was introduced by Bowen in \cite{Bo71} and roughly means that arbitrary number of pieces of orbits can be ``glued" to obtain a real orbit that 
shadows the previous ones.
It is well-known that all topologically transitive uniformly hyperbolic dynamical systems satisfy the specification property.
Dynamical systems satisfying the specification property are intensively studied from
an ergodic viewpoint \cite{B,S} and algebraic viewpoint \cite{ADK,L}.

Recently, several authors studied the specification property from a viewpoint of geometric theory
of dynamical systems. In \cite{SSY}, Sakai and the first and third authors
proved that the $C^1$-interior of the set of all diffeomorphisms satisfying the specification property
coincides with the set of all transitive Anosov diffeomorphisms.
Moriyasu, Sakai and the third author extended the above results to
regular maps, and proved that $C^1$-generically, regular maps satisfy the specification property
if and only if they are transitive Anosov (\cite{MSY}).
A counterpart of these results for the time-continuous setting was obtained more recently
by Arbieto, Senos and Todero \cite{AST}.
Owing to these results, the relation to hyperbolicity turns out to be clear.
The aim of this paper is to explain the results on the specification property of non-hyperbolic dynamical systems.
More precisely, this paper is largely motivated by the result of Bonatti, D\'iaz and Turcat
(\cite{BDT}) on the shadowing properties. 
Since specification and shadowing are closely related, although none implies the other, 
before stating our main theorem, we explain their main result.

Throughout, let $M$ be a closed manifold with $\dim M\ge 3$, where $\dim E$ denotes the dimension of $E$, and
let ${\rm Diff}(M)$ be the space of $C^1$-diffeomorphisms of a closed $C^{\infty}$ manifold $M$
endowed with the $C^1$-topology.
Given $f\in {\rm Diff}(M)$,
a $Df$-invariant splitting
$TM=E\oplus F$ is
dominated if there is a constant $k\in\mathbb{N}$ such that
$$\frac{\|D_xf^k(u)\|}{\|D_xf^k(w)\|}<\frac{1}{2},$$
for every $x\in M$ and every pair of unitary vectors $u\in E(x)$
and $w\in F(x)$.
In some cases, we consider splittings with three bundles. A
$Df$-invariant splitting $TM=E\oplus F\oplus G$
is dominated if both splittings $(E\oplus F)\oplus G$ and
$E\oplus(F\oplus G)$ are dominated.

A $Df$-invariant bundle $E$ is uniformly contracting (resp. expanding) if
there are $C>0$ and $0<\lambda<1$ such that for every $n>0$ one has
$\|D_xf^n(v)\|\le C\lambda^n\| v\|$ (resp. $\| D_xf^{-n}(v)\|\le
C\lambda^n\| v\|$) for all $x\in M$
and $v\in E$.

We say that the diffeomorphism $f$ is partially hyperbolic (resp. strongly partially hyperbolic)
if there is a $Df$-invariant splitting $TM=E^s\oplus E^c\oplus E^u$ such that
$E^s$ and $E^u$ are uniformly contracting and uniformly expanding respectively,
and at least one of them is (resp. both of them are) not empty.
In fact a Riemannian metric that generates a norm satisfying $C=1$ is called \emph{adapted metric}, and the existence
of adapted metrics for partially hyperbolic transformations was obtained by Gourmelon~\cite{Gou07}.
Moreover, it is well known that when $E^{\sigma}$ is not empty, the sub-bundle $E^{\sigma}$ is uniquely integrable
and hence there is a foliation $\mathcal{F}^{\sigma}$
which is tangent to $E^{\sigma}$ ($\sigma=s,u$). We refer to $\mathcal{F}^u$ as the
strong unstable foliation and to $\mathcal{F}^s$ as the strong stable foliation.

A diffeomorphism is hyperbolic if it is strongly partially hyperbolic and $E^c$ is empty.
We say that $E^c$ is the central direction of the splitting.
In this paper, we often treat strongly partially hyperbolic diffeomorphisms with one-dimensional central direction,
so we denote by $\mathcal{SPH}_1(M)$ the set of such diffeomorphisms. We note that $\mathcal{SPH}_1(M)$ is open in ${\rm Diff}(M)$.
In the case that $p$
is a hyperbolic periodic point for $f$ then there exists $\vep>0$ small so that the unstable set
\begin{align*}
W_\vep^u(p) =\left\{x\in M : d(f^{-n}(x),f^{-n}(p))\le \vep \text{ for all } n \ge 0\right\} 
		=\bigcap_{n\ge 0} B_{-n}(p,\vep)
\end{align*}
is the local unstable manifold at $p$ with size $\vep$. Analogously, $W_\vep^s(p) =\bigcap_{n\ge 0} B_{n}(p,\vep)$.
We refer the reader to \cite{HPS} and \cite{Shub} for more details.

We say that $f\in {\rm Diff}(M)$ is \textit{transitive}
if there is $x\in M$ whose orbit is dense in $M$.
A diffeomorphism $f$ is \textit{robustly transitive} if there is a $C^1$-neighborhood
$\mathcal{U}(f)$ of $f$ in ${\rm Diff}(M)$ such that any $g\in\mathcal{U}(f)$ is transitive.
Denote by $\mathcal{RNT}$ the set of robustly non-hyperbolic transitive diffeomorphisms
in ${\rm Diff}(M)$, that is, the set of diffeomorphisms $f$ having a $C^1$-neighborhood
$\mathcal{U}(f)$ of $f$ such that every $g\in\mathcal{U}(f)$ is non-hyperbolic and transitive.

A diffeomorphism $f\in {\rm Diff}(M)$ satisfies the \emph{shadowing property} if for any $\vep>0$ there exists
$\delta>0$ such that for every sequence $(x_n)_{n\in\mathbb Z}$ of points in $M$ satisfying $d(f(x_n),x_{n+1})<\delta$
$(n\in\mathbb{Z})$, there exists $x\in M$ so that $d(f^n(x),x_{n})<\vep$ $(n\in\mathbb{Z})$. In other words, the orbit of $x$ $\vep$-shadows the 
$\delta$-pseudo-orbit $(x_n)_{n\in\mathbb Z}$.
In \cite{BDT}, Bonatti, D\'iaz and Turcat proved the following theorem and corollary:

\begin{thm}
\label{shadow}
Let $f\colon M\to M$ be a transitive diffeomorphism with a strongly partially hyperbolic splitting on $M$ with $\dim M=3$. Assume that $f$ has two hyperbolic
periodic points $p$ and $q$ such that $\dim(W^s(p))=2$ and $\dim (W^s(q))=1$.
Then $f$ does not satisfy the shadowing property.
\end{thm}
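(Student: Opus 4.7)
My plan is proof by contradiction, exploiting the rigidity of the strong invariant foliations in this low-dimensional setting. Since $\dim M=3$ and the splitting $TM=E^s\oplus E^c\oplus E^u$ is strongly partially hyperbolic, each subbundle is one-dimensional. The hypothesis $\dim W^s(p)=2$ then forces $W^u_{\mathrm{loc}}(p)$ to be a one-dimensional arc of the strong unstable leaf $\mathcal{F}^u(p)$ (tangent to $E^u$), and symmetrically $W^s_{\mathrm{loc}}(q)=\mathcal{F}^s_{\mathrm{loc}}(q)$ (tangent to $E^s$). In contrast, $W^s_{\mathrm{loc}}(p)$ and $W^u_{\mathrm{loc}}(q)$ are two-dimensional and mix a strong with the central direction.

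Given $\varepsilon>0$ small, let $\delta=\delta(\varepsilon)$ be the shadowing modulus. Using transitivity (density of some orbit in every non-empty open set) I would extract a \emph{bridge}, i.e.\ an orbit segment $z,f(z),\ldots,f^T(z)$ with $z\in B(p,\delta)$ and $f^T(z)\in B(q,\delta)$. Because $\mathcal{F}^u_{\mathrm{loc}}(p)$ is a one-dimensional arc inside the three-dimensional ball $B(p,\delta)$, I may additionally choose $z$ off that arc by a definite transverse margin. Concatenating the constant pseudo-orbit at $p$ for $n\le -1$, the bridge for $0\le n\le T$, and the constant pseudo-orbit at $q$ for $n\ge T+1$ produces a $\delta$-pseudo-orbit $\{y_n\}_{n\in\mathbb{Z}}$.

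Let $x$ be an $\varepsilon$-shadow of $\{y_n\}$. Since the backward iterates of $x$ stay in $B(p,\varepsilon+\delta)$ for all $n\le 0$, the definition of the local unstable manifold yields $x\in W^u_{\varepsilon+\delta}(p)=\mathcal{F}^u_{\varepsilon+\delta}(p)$; symmetrically $f^{T+1}(x)\in W^s_{\varepsilon+\delta}(q)=\mathcal{F}^s_{\varepsilon+\delta}(q)$. Hence $x$ is pinned to the intersection $\mathcal{F}^u_{\mathrm{loc}}(p)\cap f^{-(T+1)}\!\big(\mathcal{F}^s_{\mathrm{loc}}(q)\big)$, a set of (generically isolated) points in the $3$-manifold $M$.

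The main obstacle is upgrading this geometric pinning into a rigorous contradiction, since two $1$-dimensional submanifolds of a $3$-manifold may very well meet at a discrete non-empty set. My plan is to leverage the transitivity-based freedom in the choice of bridge: as the transitive orbit used to construct the bridge varies, the endpoint $z$ sweeps through an open subset of $B(p,\delta)$, yet for each possible shadow $x$ the constraint $d(x,z)<\varepsilon$ confines $z$ to the $\varepsilon$-tubular neighborhood of the discrete set $\mathcal{F}^u_{\mathrm{loc}}(p)\cap f^{-(T+1)}(\mathcal{F}^s_{\mathrm{loc}}(q))$. A quantitative bound on the number of intersection points, obtained from the partial hyperbolicity estimates that control how far the iterated strong stable plaque through $q$ can extend, together with a volume comparison between these $\varepsilon$-balls and the ball $B(p,\delta)$, should deliver the contradiction once $\varepsilon$ is chosen small enough relative to $\delta$ and to the contraction/expansion rates at $p$ and $q$. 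Making this quantitative comparison rigorous — i.e.\ identifying the one-dimensional obstruction and ruling out that the shadowing point can be found for every transitive bridge — is the hardest step of the argument.
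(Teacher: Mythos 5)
You correctly reduce the problem to the one\-dimensionality of $W^u(p)=\mathcal F^u(p)$ and $W^s(q)=\mathcal F^s(q)$, and you correctly observe that a shadowing point is pinned to $\mathcal F^u_{\mathrm{loc}}(p)\cap f^{-(T+1)}(\mathcal F^s_{\mathrm{loc}}(q))$. But the step you yourself flag as the hardest one is a genuine gap, and the repair you propose cannot work. First, the quantifiers in the shadowing property go the wrong way for a volume comparison: $\varepsilon$ is given first and $\delta=\delta(\varepsilon)$ is chosen afterwards (and is in practice much smaller), so the entire set of admissible bridge points $z$ lives inside $B(p,\delta)$, which already fits inside a \emph{single} $\varepsilon$-ball around one intersection point; comparing the volume of $B(p,\delta)$ with the total volume of the $\varepsilon$-tubes therefore yields no contradiction. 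Second, there is no uniform bound on $\#\bigl(\mathcal F^u_{\mathrm{loc}}(p)\cap f^{-(T+1)}(\mathcal F^s_{\mathrm{loc}}(q))\bigr)$: the curve $f^{-(T+1)}(\mathcal F^s_{\mathrm{loc}}(q))\subset W^s(q)$ grows as $T\to\infty$, and in the transitive setting the union over $T$ of these intersection sets is typically dense in $\mathcal F^u_{\mathrm{loc}}(p)$. Third, and most fundamentally, a nonempty discrete intersection is perfectly compatible with shadowing --- the shadowing point would simply be one of those intersection points --- so no counting argument can finish; the contradiction has to come from arranging the relevant intersection to be \emph{empty}.

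That is exactly what the argument of Bonatti--D\'iaz--Turcat does (and what the present paper does for specification in Propositions \ref{hairu} and \ref{hazusu}): instead of anchoring the backward half of the pseudo-orbit at $p$ itself, one anchors it at a carefully chosen point $x$ far out in the dense set $W^u(p)$. Proposition \ref{hairu} pins any shadow of that backward orbit into the short strong-unstable plaque $\gamma^u_{4\varepsilon}(x)$, and the choice of $x$ is made, via a holonomy/dimension-count argument along $\mathcal F^u$, so that $f^N(\gamma^u_{\eta}(x))\cap W^s_{\eta}(q)=\emptyset$: the point is that $W^s_\eta(q)$ is $1$-dimensional while a transversal to $E^u$ is $2$-dimensional, so the plaques of $\mathcal F^u$ that meet $f^{-N}(W^s_\eta(q))$ project to a nowhere dense subset of that transversal and can be avoided. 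This ``move the anchor along $W^u(p)$ to dodge the stable manifold'' step is the missing idea; with it, the shadowing point is forced into an empty set and the proof closes. Your proposal never exploits the freedom to vary $x$ inside $W^u(p)$ (it only varies the bridge point $z$ in a $3$-dimensional ball), which is why it stalls.
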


\begin{cor}
\label{corshadow}
Let $\dim M=3$. There is a $C^1$-open and dense subset $\mathcal{P}$ in $\mathcal{RNT}\cap\mathcal{SPH}_1(M)$ such that
every $f\in\mathcal{P}$ does not satisfy the shadowing property.
\end{cor}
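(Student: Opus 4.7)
The plan is to define
\[
\mathcal{P} := \left\{\, f\in \mathcal{RNT}\cap\mathcal{SPH}_1(M) : f \text{ has hyperbolic periodic points } p, q \text{ with } \dim W^s(p)=2,\ \dim W^s(q)=1 \,\right\},
\]
and show that $\mathcal{P}$ is $C^1$-open and dense in $\mathcal{RNT}\cap\mathcal{SPH}_1(M)$. Once this is done, Theorem \ref{shadow} applies directly to every $f\in\mathcal{P}$, using the two periodic points $p,q$, and yields that $f$ fails the shadowing property.

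First I would check $C^1$-openness. Both $\mathcal{RNT}$ and $\mathcal{SPH}_1(M)$ are open by their definitions. Moreover, hyperbolic periodic points admit hyperbolic continuations under small $C^1$-perturbations, and the dimensions of their stable and unstable manifolds are preserved. Thus for any $f\in\mathcal{P}$, the continuations $p_g, q_g$ of $p,q$ persist for $g$ in some $C^1$-neighborhood of $f$, yielding $g\in\mathcal{P}$.

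The main step is $C^1$-density. Fix $f\in\mathcal{RNT}\cap\mathcal{SPH}_1(M)$ with splitting $E^s\oplus E^c\oplus E^u$ and $\dim E^c=1$. Because the splitting is dominated and $E^c$ is one-dimensional, any hyperbolic periodic point of $f$ (or of any nearby $g$) has stable manifold of dimension either $\dim E^s=1$ or $\dim E^s+1=2$, according to whether the central exponent along the orbit is positive or negative. Hence density of $\mathcal{P}$ reduces to producing, by an arbitrarily small $C^1$-perturbation, hyperbolic periodic points of \emph{both} central-exponent signs. I would prove this by contradiction: suppose there is $f$ arbitrarily close to which every hyperbolic periodic point has, say, negative central exponent. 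Then by a Mañé-type ergodic closing lemma, every ergodic $f$-invariant measure has non-positive central exponent; combined with the domination $(E^s\oplus E^c)\oplus E^u$, a standard uniformization argument (cf.\ the Ma\~{n}\'e/Pujals--Sambarino criterion for dominated splittings on one-dimensional central bundles) upgrades this to uniform contraction of $E^c$ on the whole manifold. This contradicts $f\in\mathcal{RNT}$, as it would make $f$ Anosov. The symmetric case is identical. Hence in every $C^1$-neighborhood of $f$ lies some $g$ with hyperbolic periodic points of both indices, and by the openness step such $g$ lies in $\mathcal{P}$.

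The main obstacle is this density step, where one must combine the robust non-hyperbolicity assumption with $C^1$-perturbation tools (the ergodic closing lemma and a dominated-splitting criterion for uniform hyperbolicity) to force the coexistence of periodic points of different indices. Once the index dichotomy is secured, Theorem \ref{shadow} closes the argument.
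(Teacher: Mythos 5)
Your overall skeleton is the same as the paper's: exhibit a $C^1$-open and dense subset of $\mathcal{RNT}\cap\mathcal{SPH}_1(M)$ on which saddles of both indices coexist, note that with $\dim E^c=1$ the index determines whether $\dim W^s$ equals $1$ or $2$, and then apply Theorem \ref{shadow}. The openness step and the final deduction are fine. The difference is that the paper (in its proof of the analogous corollary for specification) obtains the density of coexisting indices by simply invoking \cite[Theorem 3.1]{AD}, which states exactly that there is an open and dense subset of $\mathcal{RNT}$ whose elements have two saddles of different indices; you instead try to prove this density from scratch, and that is where your argument has genuine gaps.

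Two concrete problems in your density step. First, the negation of density gives you an $f$ and a neighborhood $\mathcal{U}$ such that no single $g\in\mathcal{U}$ has saddles of both indices; it does \emph{not} give you a fixed sign of the central exponent shared by all periodic points of all $g\in\mathcal{U}$. A priori some $g_1\in\mathcal{U}$ could have only index-one saddles and some $g_2\in\mathcal{U}$ only index-two saddles, and passing from ``each map sees one index'' to ``one fixed index for the whole neighborhood'' requires an argument (this is where transitivity and the structure of homoclinic classes, as in \cite{ABCDW} and \cite{AD}, actually enter). Second, the implication ``every ergodic measure has non-positive central exponent $+$ domination $\Rightarrow$ $E^s\oplus E^c$ is uniformly contracted'' is false as stated: an ergodic measure with central exponent exactly zero is compatible with your hypothesis but obstructs uniform contraction. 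The correct $C^1$ argument (Ma\~n\'e's, as used in \cite{BDP}) must exploit the \emph{robust} absence of one index across the whole neighborhood, using Franks' lemma to perturb the derivative along orbit segments with small averaged central derivative and create a periodic point of the forbidden index, thereby excluding zero exponents before any uniformization. As written, your sketch asserts the conclusion of a substantial theorem rather than proving it; either fill in these two steps or, as the paper does, cite \cite[Theorem 3.1]{AD}, after which the corollary follows immediately from Theorem \ref{shadow} together with the openness of $\mathcal{SPH}_1(M)$.
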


As mentioned before, both the specification and shadowing properties reflect the approachability
of pseudo-orbits or finite pieces of orbits of the dynamical system by true orbits.  Although these two notions 
do not coincide in general, it is known that $C^1$-robust specification and $C^1$-robust shadowing are 
equivalent to uniform hyperbolicity. We refer the reader to  \cite{PT,SSY} for the precise statements. Moreover,
the relation between the shadowing and specification properties for continuous maps have been studied more
recently by Kwietniak and Oprocha in \cite{KO}.  
Inspired by the previous results we proved that the absence of specification is $C^1$-open
near some partially hyperbolic dynamical systems that are not uniformly hyperbolic. More precisely:

\begin{maintheorem} \label{main}
Let $f\colon M\to M$ be a diffeomorphism admitting a partially hyperbolic splitting $E^s\oplus E^c\oplus E^u$.
Assume that there are two hyperbolic periodic points $p$ and $q$ such that either
${\rm dim}\;E^u={\rm dim}\;W^u(p)<{\rm dim}\;W^u(q)$ or ${\rm dim}\;E^s={\rm dim}\;W^s(q)<{\rm dim}\;W^s(p).$
Then $f$ does not satisfy the specification property.
\end{maintheorem}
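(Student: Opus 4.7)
I argue by contradiction: assume $f$ satisfies specification. Without loss of generality I treat the first alternative $\dim E^u=\dim W^u(p)<\dim W^u(q)$; the other follows by applying the argument to $f^{-1}$. The equality $\dim W^u(p)=\dim E^u$ forces $T_pW^u(p)=E^u(p)$, and unique integrability of the strong unstable bundle gives $W^u(p)\subseteq\mathcal{F}^u(p)$. Consequently the central Lyapunov exponents at $p$ are all negative and $E^c_p\subseteq T_pW^s(p)$, while at $q$ the opposite inequality forces at least one positive central exponent. After passing to a suitable iterate I take $p,q$ as fixed points and fix a metric adapted to the splitting.

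\textbf{Step 1 (specification-driven construction).} Fix $\varepsilon>0$ with $\varepsilon<\tfrac12 d(p,q)$ and let $N=N(\varepsilon)$. For each $n\ge 1$ specification provides $y_n\in M$ with
\[
d(f^i(y_n),p)\le\varepsilon\;(0\le i\le n),\qquad d(f^{n+N+j}(y_n),q)\le\varepsilon\;(0\le j\le n),
\]
so that $y_n\in B_n(p,\varepsilon)$ and $f^{n+N}(y_n)\in B_n(q,\varepsilon)$. Passing to a subsequence, $y_n\to y_\infty\in W^s_{loc}(p)$, a submanifold of dimension $\dim E^s+\dim E^c$.

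\textbf{Step 2 (tracking the orbit).} In adapted local coordinates at $p$, decompose $y_n-p$ along $E^u\oplus E^c\oplus E^s$ as $u_n+c_n+s_n$. Since $d(f^n(y_n),p)\le\varepsilon$ and $E^u$ is uniformly expanded at rate $\lambda_u>1$, one obtains $|u_n|\lesssim\lambda_u^{-n}\varepsilon$, while $|c_n|,|s_n|\le\varepsilon$ are contracted under $Df^{n+N}$. Therefore
\[
d(f^{n+N}(y_n),p)\;\lesssim\;\lambda_u^N\varepsilon+o(1)\qquad(n\to\infty),
\]
and combined with $d(f^{n+N}(y_n),q)\le\varepsilon$ the triangle inequality gives $d(p,q)\le(1+\lambda_u^{N(\varepsilon)})\varepsilon+o(1)$ for every $\varepsilon$.

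\textbf{Main obstacle.} A single alternation between $p$- and $q$-shadowing only forces $N(\varepsilon)\gtrsim\log(1/\varepsilon)/\log\lambda_u$, which is consistent with the size of the specification constant on hyperbolic systems, so this is not yet a contradiction. To upgrade it, one must genuinely exploit the rigidity coming from $W^u(p)=\mathcal{F}^u(p)$ (confinement to a single strong unstable leaf) against $\dim W^u(q)>\dim E^u$ (central expansion at $q$). The intended strategy is to iterate the construction with many alternations between long $p$- and $q$-shadowing windows: each $q$-phase forces the iterate to gain a central-unstable component which has no room inside the single leaf $\mathcal{F}^u(p)$ to which $W^u(p)$ is confined, and invariance of the foliation then obstructs returning tangentially to a $p$-neighborhood on subsequent $p$-phases. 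Quantifying the accumulated central drift uniformly in $n$ and the number of alternations, using the adapted metric and continuity of the strong unstable holonomy, is the heart of the proof.
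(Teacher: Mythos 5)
Your proposal does not reach a proof: you flag the ``main obstacle'' yourself, and the multi-alternation ``central drift'' scheme you sketch to overcome it is neither carried out nor the mechanism that actually works. The single-alternation estimate of Step 2 is vacuous, as you correctly observe, but the fix is not to iterate alternations and bring the orbit back to $p$; it is to change what specification is asked to connect. The paper's argument rests on three facts. First (your correct observation, and the role of $\dim W^u(p)=\dim E^u$): $W^u(p)$ sits inside a single strong unstable leaf, so by \cite[Proposition 3]{BDT} any $z$ with $d(f^{-n}(z),f^{-n}(x))\le\varepsilon$ for all $n\ge1$ and $x\in W^u(p)$ is confined to the $\dim E^u$-dimensional disk $\gamma^u_{4\varepsilon}(x)$. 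Second, specification forces $W^u(p)$ to be \emph{dense} in $M$ (glue a long backward segment at $p$ to an arbitrary target point and pass to the limit). Third --- the step entirely absent from your proposal, and the only place the strict inequality $\dim W^u(q)>\dim E^u$ enters quantitatively --- a dimension count: $\dim W^s_\eta(q)=\dim M-\dim W^u(q)<\dim M-\dim E^u$, so $W^s_\eta(q)$ is nowhere dense in a disk transverse to $E^u$; combining this with the density of $W^u(p)$ and the local product structure of the strong unstable foliation yields $x\in W^u(p)$ and $\eta>4\varepsilon$ with $f^N(\gamma^u_\eta(x))\cap W^s_\eta(q)=\emptyset$. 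A \emph{single} application of specification, gluing the backward orbit of $x$ to the forward orbit of $q$ with gap $N$, then produces a point that must lie both in $f^N(\gamma^u_{4\varepsilon}(x))$ (by confinement) and in $W^s_\eta(q)$ (by forward shadowing of $q$), a contradiction.

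Concretely, the gaps are: (i) the hypothesis $\dim W^u(q)>\dim E^u$ is never used except as prose about ``central expansion at $q$''; (ii) you never exploit that specification makes $W^u(p)$ dense, which is what permits choosing the base point $x$ of the unstable disk away from $p$ so that its image avoids $W^s_\eta(q)$; (iii) the accumulated-drift scheme would have to control the orbit during the unconstrained gaps of length $N$ and rule out cancellation of the drift across alternations, and no mechanism for either is given. As written, the proposal establishes only the bound $d(p,q)\le(1+\lambda_u^{N(\varepsilon)})\varepsilon+o(1)$, which, as you say, is not a contradiction.
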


Let us mention that our result holds for partially hyperbolic dynamical systems in manifolds with $\dim M \ge 3$. 
In the case that the central direction $E^c$ is one dimensional, any two hyperbolic periodic points 
with different indices verify the previous assumptions. Hence, we obtain from the 
previous result the following consequence.

\begin{maincorollary}
Let $f\in \mathcal{SPH}_1(M)$ and suppose that there exist two hyperbolic periodic points $p,q$
with different indices. Then $f$ does not satisfy the specification
property.
\end{maincorollary}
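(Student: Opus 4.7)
The plan is to deduce the Main Corollary as a direct consequence of the Main Theorem, so the only real work is to check that the hypothesis that $p$ and $q$ have different indices, together with the assumption $f\in\mathcal{SPH}_1(M)$, forces one of the two dimensional comparisons in the Main Theorem to hold.

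First I would fix the dimensional bookkeeping. Write $s:=\dim E^s$ and $u:=\dim E^u$, so that $\dim M=s+1+u$ because $\dim E^c=1$. For any hyperbolic periodic point $r$ of $f$, the strong stable leaf $\mathcal{F}^s(r)$ is tangent to $E^s$ and contained in $W^s(r)$, and analogously for $\mathcal{F}^u(r)$. Because $E^c$ is one-dimensional, at $r$ the action of an iterate of $Df$ on $E^c$ is either contracting or expanding, so the central direction is added to exactly one of the invariant manifolds. Consequently $\dim W^s(r)\in\{s,s+1\}$ and $\dim W^u(r)=\dim M-\dim W^s(r)\in\{u+1,u\}$.

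Next I would use the hypothesis that the indices $\dim W^s(p)$ and $\dim W^s(q)$ differ. Relabeling if necessary, we may assume $\dim W^s(p)=s+1$ and $\dim W^s(q)=s$. Then
\[
\dim E^s=s=\dim W^s(q)<\dim W^s(p),
\]
which is exactly the second alternative in the hypothesis of the Main Theorem (equivalently, $\dim E^u=u=\dim W^u(p)<u+1=\dim W^u(q)$, which is the first alternative). Since $f$ does admit a partially hyperbolic splitting $E^s\oplus E^c\oplus E^u$ by the definition of $\mathcal{SPH}_1(M)$, the Main Theorem applies and yields that $f$ does not satisfy the specification property.

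I do not anticipate any real obstacle here; the one thing worth being careful about is the case analysis to cover both possible orderings of the indices, and the harmless relabeling of $p,q$ that makes the statement of the Main Theorem directly applicable. The only genuine content is the observation that in codimension one central bundles, differing indices are automatically compatible with the dimensional inequalities required by the Main Theorem, so strictly speaking the corollary is a one-line consequence once the dimensional dictionary is spelled out.
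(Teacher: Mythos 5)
Your proposal is correct and follows exactly the route the paper takes: the paper derives this corollary from Theorem A with the one-line remark that when $\dim E^c=1$ any two hyperbolic periodic points of different indices satisfy the dimensional hypotheses, and your dimension count ($\dim W^s(r)\in\{s,s+1\}$ for every hyperbolic periodic point $r$, so differing indices force $\dim E^s=\dim W^s(q)<\dim W^s(p)$ after relabeling) is just that remark spelled out carefully. Nothing is missing.
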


We note that the previous corollary is not only the analogous result to
Theorem \ref{shadow}, but also states that the non-hyperbolic transitive diffeomorphisms
seldom have the specification property.
The following corollary is a counterpart of Corollary \ref{corshadow}.

\begin{maincorollary}
\label{corpar}
There is a $C^1$-open and dense subset $\mathcal{P}$ in $\mathcal{RNT}\cap\mathcal{SPH}_1(M)$, such 
that every $f\in\mathcal{P}$ does not satisfy the specification property.
\end{maincorollary}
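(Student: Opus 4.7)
The strategy is to introduce
$$\mathcal{P} := \bigl\{\, f \in \mathcal{RNT}\cap\mathcal{SPH}_1(M) : f \text{ admits two hyperbolic periodic points with different indices} \,\bigr\},$$
and to show this set is $C^1$-open and dense in $\mathcal{RNT}\cap\mathcal{SPH}_1(M)$. Once this is established, Corollary B (the preceding main corollary) applies to every $f \in \mathcal{P}$ and gives the failure of specification, which is the desired conclusion.

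Openness is the easy direction. The set $\mathcal{RNT}$ is $C^1$-open by definition, $\mathcal{SPH}_1(M)$ is $C^1$-open as noted in the introduction, and each hyperbolic periodic point is $C^1$-persistent with unchanged index under small perturbations. Hence the intersection $\mathcal{P}$ is $C^1$-open in $\mathcal{RNT}\cap\mathcal{SPH}_1(M)$.

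For density, fix $f \in \mathcal{RNT}\cap\mathcal{SPH}_1(M)$ and a $C^1$-neighborhood $\mathcal{U}$ of $f$. Since $\dim E^c = 1$, every hyperbolic periodic point $p$ of a diffeomorphism $g$ near $f$ has index either $\dim E^s$ or $\dim E^s+1$, according to whether $Dg^{\pi(p)}|_{E^c(p)}$ is expanding or contracting. Assume for contradiction that no $g \in \mathcal{U}$ has two hyperbolic periodic points of distinct indices; then, shrinking $\mathcal{U}$, all hyperbolic periodic orbits of every $g \in \mathcal{U}$ exhibit the same central behavior, say uniformly expanding in $E^c$. Combining transitivity with Pugh's closing lemma and Ma\~{n}\'{e}'s ergodic closing lemma, together with the domination in the splitting $E^s\oplus E^c\oplus E^u$, one promotes this pointwise central expansion at periodic orbits to uniform expansion of the one-dimensional bundle $E^c$ for a $C^1$-residual $g$ in $\mathcal{U}$. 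But then $g$ is Anosov, contradicting $g \in \mathcal{RNT}$. Hence some $g \in \mathcal{U}$ must have two hyperbolic periodic points of distinct indices, giving $g \in \mathcal{U}\cap\mathcal{P}$ and proving density.

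The main obstacle is exactly the step that upgrades ``all periodic points have center of fixed sign'' to ``$E^c$ is uniformly hyperbolic,'' which rests on Ma\~{n}\'{e}-type arguments valid in the $C^1$-generic setting in the presence of a dominated splitting with one-dimensional center. A more constructive alternative that avoids invoking genericity is to exhibit, via Franks' lemma, a periodic orbit whose central eigenvalue can be made to cross the neutral value within $\mathcal{U}$: robust non-hyperbolicity prevents the central eigenvalues at periodic orbits from being bounded uniformly away from $1$, so after an arbitrarily small perturbation localized along one such orbit one flips the sign of the central Lyapunov exponent and produces a hyperbolic periodic point of the complementary index, while keeping a preexisting one of the original index. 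Either route yields $g\in \mathcal{U}\cap\mathcal{P}$ and completes the proof.
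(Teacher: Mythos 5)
Your proof has the same top-level architecture as the paper's: define $\mathcal{P}$ as the diffeomorphisms in $\mathcal{RNT}\cap\mathcal{SPH}_1(M)$ possessing two hyperbolic periodic points of different indices, check openness via persistence of hyperbolic periodic orbits and openness of $\mathcal{SPH}_1(M)$, and conclude by the one-dimensional-center observation that such points satisfy $\dim E^u=\dim W^u(p)<\dim W^u(q)$, so Theorem A (equivalently the first main corollary) applies. Where you diverge is the density step. The paper disposes of it in one line by citing \cite{AD}, Theorem 3.1, which asserts precisely that having two saddles of different indices is open and dense in $\mathcal{RNT}$; intersecting with the open set $\mathcal{SPH}_1(M)$ then finishes the argument. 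You instead attempt to prove this density directly, arguing by contradiction that if every $g$ near $f$ had all periodic points of the same index, then the central expansion (or contraction) at periodic orbits could be promoted to uniform hyperbolicity of $E^c$, making $g$ Anosov and contradicting robust non-hyperbolicity.

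The issue is that this promotion step --- from ``all periodic orbits of all nearby diffeomorphisms have center of a fixed sign'' to ``$E^c$ is uniformly expanding'' --- is itself the hard theorem here; it is essentially the Ma\~{n}\'{e}-type machinery (ergodic closing lemma, Franks' lemma, Pliss-type selection of hyperbolic times along the dominated splitting) that underlies the result of Abdenur--D\'iaz the paper cites. You correctly identify it as ``the main obstacle'' and name the right tools, but you do not carry it out, and your ``constructive alternative'' via Franks' lemma quietly relies on the same unproved fact (that robust non-hyperbolicity forces central eigenvalues at periodic orbits not to be uniformly bounded away from $1$, which is again the contrapositive of the promotion step). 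So your argument is not wrong in spirit, but as written the density half is a sketch of the cited theorem rather than a proof of it; the clean fix is simply to invoke \cite{AD}, Theorem 3.1, as the paper does. A minor additional point: your contradiction argument should also address why the common index is constant across the whole neighborhood $\mathcal{U}$ rather than varying with $g$ (this follows from connectedness of $\mathcal{U}$ and continuation of hyperbolic periodic orbits, but it is worth saying).
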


If $\dim M=3$, then we can remove the assumption of partial hyperbolicity.

\begin{maincorollary}
\label{dim3}
Suppose that $\dim M=3$. Then there is a $C^1$-dense open subset
$\mathcal{P}$ in $\mathcal{RNT}$ so that every $f\in\mathcal{P}$
does not satisfy the specification property.
\end{maincorollary}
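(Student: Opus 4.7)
The plan is to extend Corollary \ref{corpar} from $\mathcal{RNT}\cap \mathcal{SPH}_1(M)$ to all of $\mathcal{RNT}$ in dimension three by invoking the structure theorem of D\'iaz, Pujals and Ures, which asserts that every $C^1$-robustly transitive diffeomorphism of a closed $3$-manifold admits a partially hyperbolic splitting of one of the forms $E^s\oplus E^{cu}$, $E^{cs}\oplus E^u$, or $E^s\oplus E^c\oplus E^u$, with one-dimensional extremal uniformly hyperbolic bundles. In all three cases $f$ is partially hyperbolic in the sense used in this paper, so the Main Theorem becomes available.

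Let $\mathcal{P}\subset \mathcal{RNT}$ be the subset of $f$ possessing, in addition, two hyperbolic periodic points $p,q$ of distinct indices. Since hyperbolic periodic points and the dominated splitting persist under $C^1$-small perturbations, $\mathcal{P}$ is $C^1$-open. For density I would proceed exactly as in the proof of Corollary \ref{corpar}: combining Kupka-Smale with standard connecting and perturbation lemmas, within any open subset of $\mathcal{RNT}$ one can produce, by a $C^1$-small perturbation, hyperbolic periodic orbits of two different indices. The alternative, namely that all hyperbolic periodic points share a common index, would by Ma\~n\'e-type arguments force uniform hyperbolicity and contradict membership in $\mathcal{RNT}$.

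With $\mathcal{P}$ open and dense in $\mathcal{RNT}$, it remains to verify the hypotheses of the Main Theorem for each $f\in\mathcal{P}$. In the strongly partially hyperbolic case this is precisely what Corollary \ref{corpar} already provides. In the cases $E^s\oplus E^{cu}$ or $E^{cs}\oplus E^u$, the minimal attainable stable (respectively unstable) index equals $\dim E^s=1$ (respectively $\dim E^u=1$), realised for instance by a periodic point whose stable (unstable) manifold is tangent to $E^s$ ($E^u$), while the second periodic point supplied by the density step has strictly larger stable (unstable) dimension. Either way the inequality in the Main Theorem is satisfied, and the theorem then delivers the failure of specification for every $f\in\mathcal{P}$.

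The principal obstacle, as in Corollary \ref{corpar}, is the density step: one must combine the D\'iaz-Pujals-Ures structure theorem with $C^1$-generic creation of hyperbolic periodic orbits of distinct indices while staying inside $\mathcal{RNT}$ throughout the perturbation. Both ingredients are classical, but their joint use demands the careful perturbation bookkeeping already handled in Corollary \ref{corpar}.
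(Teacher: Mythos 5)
Your proposal follows the paper's proof in essentially the same way: an open and dense subset of $\mathcal{RNT}$ whose elements have two saddles with different indices (the paper simply cites Abdenur--D\'iaz, Theorem 3.1, for exactly the density step you sketch via Ma\~n\'e-type arguments), the D\'iaz--Pujals--Ures theorem to get a partially hyperbolic splitting in dimension three, and then Theorem~\ref{main}. Your case analysis over the three possible forms of the splitting is a harmless elaboration of what the paper compresses into the single statement that a partially hyperbolic splitting $E^s\oplus E^c\oplus E^u$ (with one extremal bundle possibly empty) exists.
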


In conclusion, together with the results by \cite{BDT} we obtain that, for three-dimensional manifolds $M$, $C^1$-openly and densely in $\mathcal{RNT}\cap\mathcal{SPH}_1(M)$ the diffeomorphisms do not satisfy both the specification and shadowing properties.  On the other hand, several authors considered more recently either measure theoretical non-uniform specification properties (see e.g. \cite{OT,Va12}) or almost specification properties (see e.g. \cite{PS05,Th10}) to the study of the ergodic properties of a given dynamical system. One remaining interesting question is to understand which partially hyperbolic maps do admit such weaker specification properties.


\section{Proof of Theorem \ref{main}}
In this section, we prove Theorem \ref{main}.
First, we rewrite the definition of the specification property using the very useful notion of (closed)
dynamical balls and prove the preliminary lemma.  Given 
$x\in M$, $\vep>0$ and $m,n\in \mathbb Z$ with $m\le n$ and $I=[m,n]$ set
\begin{equation*}
B_{I}(x,\vep)= B_{[m,n]}(x,\vep)
	=\{y\in M \colon d( f^j(y),f^j(x))\le \vep, m\le j\le n \}.
\end{equation*}
If no confusion is possible, set $B_n(x,\vep)=B_{[0,n]}(x,\vep)$ and $B_{-n}(x,\vep)=B_{[-n,0]}(x,\vep)$.
Then, the specification property can be written as follows: given $\vep>0$ there exists a positive integer 
$N=N(\vep)\ge 1$ so that for any $x_1,\dots, x_k\in M$ and intervals of integers $I_j=[m_j,n_j]$ with $m_j\le n_j$
and $m_{j+1}-n_j \ge N$ it holds that
$$
\bigcap_{j=0}^{k-1} f^{-m_j} (B_{I_j}(x_j,\vep))
	\neq \emptyset.
$$


\begin{lem}
\label{dense}
Suppose that $f\colon M\to M$ satisfies the specification property. Then for every hyperbolic 
periodic point $p$ both the stable and unstable manifolds $W^s(p)$ and $W^u(p)$ are dense in $M$.
\end{lem}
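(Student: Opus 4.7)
The plan is to use the specification property to glue the single-point orbit piece $\{y\}$ at time $0$ with a long orbit piece following the orbit of $p$ at very negative times, and then pass to a compactness limit. The resulting limit point will be close to $y$ while having its entire backward orbit uniformly $\vep$-close to the orbit of $p$, so local invariant-manifold theory at the hyperbolic periodic point will place it on $W^u(p)$. By time reversal, density of $W^s(p)$ follows from the symmetric argument with the long orbit piece placed at very positive times, so I only spell out the case of $W^u(p)$.

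Fix the hyperbolic periodic point $p$ and, by standard local unstable manifold theory for hyperbolic periodic orbits, choose $\vep_0>0$ small enough that $W^u_\vep(q)=\bigcap_{n\ge 0}B_{-n}(q,\vep)\subset W^u(q)$ for every $q$ on the orbit of $p$ and every $\vep\in(0,\vep_0]$. Given $y\in M$ and $\vep\in(0,\vep_0]$, let $N=N(\vep)$ be the specification constant. For each integer $L\ge 1$, apply specification to the intervals $I_1=\{0\}$ and $I_2=[-(N+L),-N]$ (whose separation is exactly $N\ge N(\vep)$) with prescribed points $x_1=y$ and $x_2=p$. This yields $x_L\in M$ satisfying
\begin{equation*}
d(x_L,y)\le\vep \quad\text{and}\quad d(f^j(x_L),f^j(p))\le\vep \text{ for all } -(N+L)\le j\le -N.
\end{equation*}

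By compactness of $M$ a subsequence $x_{L_k}$ converges to some $x^*\in M$, and by continuity the estimates pass to the limit: $d(x^*,y)\le\vep$ and $d(f^j(x^*),f^j(p))\le\vep$ for every $j\le -N$. Setting $q:=f^{-N}(p)$ and $y^*:=f^{-N}(x^*)$, this last inequality translates to $y^*\in W^u_\vep(q)\subset W^u(q)$, hence $x^*=f^N(y^*)\in f^N(W^u(q))=W^u(f^N(q))=W^u(p)$. Since $y$ and $\vep\in(0,\vep_0]$ were arbitrary, $W^u(p)$ is dense in $M$.

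The main point to handle carefully is that specification only provides uniform $\vep$-closeness along a finite glued orbit piece rather than genuine asymptotic convergence; the compactness limit is what upgrades this to permanent $\vep$-closeness at all times $\le -N$, at which point local hyperbolicity at $p$ converts that closeness into actual membership in the global unstable manifold. A minor book-keeping point is that $p$ is periodic rather than fixed, which is absorbed by translating by $f^{-N}$ and invoking the $f$-equivariance $f^N(W^u(q))=W^u(f^N(q))$ of unstable manifolds.
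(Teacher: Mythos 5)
Your proof is correct and follows essentially the same route as the paper's: glue a one-point piece at $y$ to a long backward orbit piece along $p$, use compactness to obtain a limit point whose entire backward orbit (after shifting by $f^{-N}$) stays $\vep$-close to the orbit of $p$, and conclude membership in $W^u(p)$ via the dynamical-ball characterization of the local unstable manifold. The only cosmetic differences are that the paper intersects the nested compact sets $f^L(B_{-n}(p,\vep))\cap B(x,\vep)$ directly instead of extracting a convergent subsequence, and assumes $p$ fixed for simplicity where you handle periodicity by equivariance.
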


\begin{proof}
Given a hyperbolic periodic point $p$ for $f$ we prove that the unstable manifold $W^u(p)$ is
dense in $M$, since the proof for the density of $W^s(p)$ is completely analogous.
Let us assume for simplicity that $p$ is a fixed point, since otherwise just consider $f^k$ where $k$ is the period of $p$.

Let $W^u_{\varepsilon_1}(p)$ denote  the local unstable manifold for some 
$\varepsilon_1 > 0$. Take any point $x\in M$ and $\varepsilon_2>0$.
It is sufficient to show that there exists a point $w\in M$
such that $d(x,w)\le \varepsilon_2$ and $w\in W^u(p)$.
We set $\varepsilon:=\frac{1}{2}\min\{\varepsilon_1,\varepsilon_2\}$
and take an integer $L\ge N(\varepsilon)$. 
Since $f$ satisfies the specification property, for any $n\ge 1$, 
$$
f^L(B_{-n}(p,\varepsilon))\cap B(x,\varepsilon)\not=\emptyset,
$$
where $B(x,\varepsilon)$ stands for the closed ball of radius $\varepsilon$ around $x$. Since the previous
is a strictly decreasing family of sets, by compactness of $M$, there exists a point $w\in M$ such that
$$w\in\bigcap_{n=1}^{\infty}f^L(B_{-n}(p,\varepsilon))\cap B(x,\varepsilon).$$
Then we have
$d(w,x)\le\varepsilon_2$ and $d(f^{-n}(f^{-L}(w)),f^{-n}(p))\le\varepsilon_1$ for any $n\ge 1$.
The latter implies that
$w\in f^L(W^u_{\varepsilon_1}(p))=f^L(W^u_{\varepsilon_1}(f^{-L}(p)))$.
Thus we have $w\in W^u(p)$ and $d(x,w)\le \varepsilon_2$,
which proves the lemma.
\end{proof}

It follows from \cite[Corollary 2]{MSY} that $C^1$-generically, non-hyperbolic diffeomorphisms
do not have the specification property. On the other hand, maps with the specification property could be dense in the complement of the uniformly hyperbolic diffeomorphisms. 
 Our purpose in Theorem~\ref{main} is to prove that this is not the case even for some partially hyperbolic dynamical systems. 

\begin{proof}[Proof of Theorem \ref{main}]
Let $f\colon M\to M$ be a diffeomorphism admitting a partially hyperbolic splitting $E^s\oplus E^c\oplus E^u$ and
assume $p$ and $q$ are  hyperbolic periodic points for $f$ satisfying ${\rm dim}\;E^u={\rm dim}\;W^u(p)<{\rm dim}\;W^u(q)$
(the case that ${\rm dim}\;E^s={\rm dim}\;W^s(q)<{\rm dim}\;W^s(p)$ is analogous). 

Assume, by contradiction, that $f$ satisfies the specification property.
Then it follows from \cite[Proposition 2 (b)]{S} that $f$ is topologically mixing.
Thus $f$ has neither sinks nor sources. In particular, ${\rm dim}\;E^u={\rm dim}\;W^u(p)>0$,
which implies that $E^u$ is not empty.

In the next proposition we recall some necessary results relating some
shadowing properties with the location of the shadowing point in unstable disks.
First we introduce a notation. For $x\in W^u(p)$ and $\eta>0$
we will consider the local unstable disk around $x$ in $W^u(p)$
given by $$\gamma_{\eta}^u(x):=\{z\in W^u(p):d^u(x,z)\le\eta\}.$$
Here $d^u$ is the distance in $W^u(p)$ induced in the Riemannian metric.

\begin{prop}[\cite{BDT}, Proposition 3]
\label{hairu}
There exists a small positive constant $\varepsilon_1$ such that for any $\varepsilon\in(0,\varepsilon_1)$
the following holds: if $x\in W^u(p)$ and $d(f^{-n}(z),f^{-n}(x))\le\varepsilon$ for any $n\ge 1$,
then $z\in\gamma^u_{4\varepsilon}(x)$.
\end{prop}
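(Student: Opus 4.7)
The plan is two-step: first show that $z$ lies on the strong unstable leaf through $x$, and then bound the intrinsic leaf distance $d^u(x,z)$ by $4\varepsilon$. The dimensional assumption $\dim E^u = \dim W^u(p)$ is essential here: it forces $W^u(p)$ to be tangent to $E^u$ at each of its points, so $W^u(p)$ coincides with an entire leaf of $\mathcal{F}^u$, and $\mathcal{F}^u(x) = W^u(p)$ for every $x \in W^u(p)$.

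For the first step I would exploit the accumulation of the backward orbit of $x$ on the orbit of $p$. Assuming for simplicity that $p$ is a fixed point, fix $\delta > 0$ smaller than the local hyperbolicity scale at $p$, and choose $N \ge 1$ with $d(f^{-n}(x), p) \le \delta/2$ for every $n \ge N$. Requiring $\varepsilon_1 < \delta/2$, the hypothesis gives $d(f^{-n}(z), p) \le \varepsilon + \delta/2 < \delta$ for all $n \ge N$. By the characterization of $W^u_\delta(p)$ as the intersection of backward dynamical balls around $p$ recalled in the paper, this forces $f^{-N}(z) \in W^u_\delta(p)$, whence $z \in f^{N}(W^u_\delta(p)) \subset W^u(p) = \mathcal{F}^u(x)$.

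For the second step I would fix an adapted metric so that $\|Df^{-1}|_{E^u}\| \le \lambda < 1$, and use that in a small neighborhood of any point the foliation $\mathcal{F}^u$ admits a plaque chart in which the intrinsic distance $d^u$ and the ambient distance $d$ are comparable up to a constant $K$ close to $1$. For the $N$ fixed above both $f^{-N}(z)$ and $f^{-N}(x)$ lie in the single plaque $W^u_\delta(p)$, so $d^u(f^{-N}(z), f^{-N}(x)) \le K\varepsilon$. A downward induction on $n$, from $n = N$ to $n = 1$, then propagates $d^u(f^{-n}(z), f^{-n}(x)) \le K\varepsilon$; the inductive step applies $f$ to the current plaque and uses the ambient bound $d \le \varepsilon$ together with $\|Df|_{E^u}\| \le \lambda^{-1}$ to confirm that $f^{-(n-1)}(z)$ still lies in the local $\mathcal{F}^u$-plaque around $f^{-(n-1)}(x)$. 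A final application of $f$ yields $d^u(z,x) \le \lambda^{-1} K \varepsilon$, and choosing $\varepsilon_1$ small enough that $\lambda^{-1} K \le 4$ completes the desired bound.

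The main obstacle, and the reason $\varepsilon_1$ must be taken small, is the plaque-by-plaque control throughout the induction. Globally a leaf of $\mathcal{F}^u$ can re-enter an arbitrarily small ambient neighborhood through disjoint plaques, so the comparison $d^u \le K d$ only holds within a single local plaque. Ensuring that the forward $f$-image of the current plaque remains inside the next plaque chart requires quantitative control on how much a plaque of $d^u$-diameter $K\varepsilon$ can spread under $f$, governed by $\lambda^{-1}$ and the local geometry of $\mathcal{F}^u$ in the adapted metric; $\varepsilon_1$ must be chosen so small that these iterated plaques remain inside fixed foliation charts all along the backward orbit, and small enough for the leaf-metric/ambient-metric comparison constant $K$ to be close to one.
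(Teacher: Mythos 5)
The paper does not actually prove this proposition: it is imported verbatim from \cite{BDT} (Proposition 3), so there is no internal argument to compare yours against. That said, your two-step outline --- first using the hyperbolicity of $p$ and the dynamical-ball characterization of $W^u_\delta(p)$ to force $z$ onto the leaf $W^u(p)=\mathcal{F}^u(x)$, then a plaque-by-plaque downward induction comparing the leaf metric $d^u$ with the ambient metric $d$ --- is the natural argument and is structurally sound. In particular you correctly isolate the role of the hypothesis $\dim W^u(p)=\dim E^u$: without it (e.g.\ with a neutral center direction) a point $z$ could stay $\varepsilon$-close to the backward orbit of $x$ while sitting off the strong unstable leaf, and the statement would be false.

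The gap is in the quantitative bookkeeping at the end. From $\|Df^{-1}|_{E^u}\|\le\lambda<1$ you only get a \emph{lower} bound $\lambda^{-1}$ on the expansion along $E^u$; what you need in order to push a leaf curve forward by $f$ is the \emph{upper} bound $\mu:=\sup_{x}\|D_xf|_{E^u}\|$, which may be much larger than $\lambda^{-1}$ and, like $K\ge 1$, is a fixed attribute of $f$ that does not shrink with $\varepsilon_1$. The corrected inductive step is: the pushed-forward curve has length at most $\mu K\varepsilon\le\rho$ (the plaque scale), which places $f^{-(n-1)}(z)$ in the plaque of $f^{-(n-1)}(x)$, and then the ambient bound $d\le\varepsilon$ --- available only for $n-1\ge 1$ --- restores $d^u\le K\varepsilon$. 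This closes the induction for $1\le n\le N$ once $\varepsilon_1$ satisfies $\mu K\varepsilon_1\le\rho$, but at the final step from $n=1$ to $n=0$ the hypothesis provides no bound on $d(z,x)$, so you only obtain $d^u(z,x)\le\mu K\varepsilon$; the claim that ``choosing $\varepsilon_1$ small enough that $\lambda^{-1}K\le 4$'' finishes the proof is therefore not right, both because the relevant constant is $\mu K$ and because neither factor decreases with $\varepsilon_1$. The clean repair is to note that the estimate is only ever invoked in the paper with $d(f^{-n}(z),f^{-n}(x))\le\varepsilon$ for all $n\ge 0$; with the $n=0$ bound in hand the plaque comparison gives $d^u(z,x)\le K\,d(z,x)\le K\varepsilon\le 4\varepsilon$ since $K$ can be taken close to $1$. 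Alternatively, keep the hypothesis at $n\ge 1$ and accept a constant depending on $\mu$ in place of $4$, which is harmless for the application in Proposition~\ref{hazusu} because $\eta$ is chosen afterwards.
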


Then we are in a position to prove the next proposition, which is a key ingredient in the proof of our main
Theorem~\ref{main}.

\begin{prop}
\label{hazusu}
Let $\varepsilon_1$ be as in Proposition \ref{hairu}. Then there exist $\eta>0$,
$\varepsilon\in(0,\varepsilon_1)$ with $4\varepsilon<\eta$
and a point
$x\in W^u(p)$ such that
$$f^N(\gamma_{\eta}^u(x))\cap W_{\eta}^s(q)=\emptyset,$$
where $N=N(\varepsilon)$ is as in the definition of the specification property.
\end{prop}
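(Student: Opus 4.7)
My plan is to construct $x$ explicitly by pulling back under $f^N$ a suitably-chosen point $y\in W^u(p)$ that lies far from $q$ in the ambient metric. The density of the strong unstable leaf $W^u(p)$ in $M$, provided by Lemma~\ref{dense}, supplies such a $y$, and the uniform expansion of $f$ on $E^u$ together with the fact that the ambient metric is dominated by the leaf metric on $W^u(p)$ gives a bound on the ambient diameter of $f^N(\gamma^u_\eta(x))$ in terms of $\eta$ and $N$.

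Setting $\Lambda:=\sup_{z\in M}\|Df(z)|_{E^u}\|$, I first pick $\varepsilon\in(0,\varepsilon_1)$ (to be adjusted) and put $\eta:=5\varepsilon$ so that $4\varepsilon<\eta$; let $N:=N(\varepsilon)$. For any $x\in W^u(p)$ and $z\in\gamma^u_\eta(x)$ one has $d^u(f^N(z),f^N(x))\le \Lambda^N\eta$, and since the ambient distance is dominated by the leaf distance on $W^u(p)$, this gives
\[
f^N(\gamma^u_\eta(x))\subseteq B(f^N(x),\Lambda^N\eta).
\]
By Lemma~\ref{dense} the leaf $W^u(p)$ is dense in $M$, so I may choose $y\in W^u(p)$ with $d(y,q)\ge\frac{1}{2}\diam(M)$; set $x:=f^{-N}(y)\in W^u(p)$. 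Since $W^s_\eta(q)\subseteq B(q,\eta)$, as long as $(\Lambda^N+1)\eta<\frac{1}{2}\diam(M)$, the balls $B(y,\Lambda^N\eta)$ and $B(q,\eta)$ are disjoint, and consequently $f^N(\gamma^u_\eta(x))\cap W^s_\eta(q)=\emptyset$.

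The main obstacle is to exhibit $\varepsilon\in(0,\varepsilon_1)$ satisfying the scale condition $5\varepsilon(\Lambda^{N(\varepsilon)}+1)<\diam(M)/2$. The quantity $\Lambda^{N(\varepsilon)}$ may grow as $\varepsilon\to 0$ since $N(\varepsilon)$ does, and so some care is needed; however the proposition only requires existence of one admissible triple, so one selects $\varepsilon$ at a scale (possibly after shrinking $\varepsilon_1$, which is permissible since Proposition~\ref{hairu} is valid for any sufficiently small scale) where the finite value of $N(\varepsilon)$ is compatible with the bound, invoking the adapted Riemannian metric on $E^u$ from \cite{Gou07} to keep $\Lambda$ under control. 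An alternative route, which avoids the exponential constant, is to argue that $W^u(p)\cap W^s(q)$ consists of isolated points in $M$ (via the implicit function theorem applied at an intersection $z$, using $T_zW^u(p)=E^u(z)$ and $T_zW^s(q)\subset E^s(z)\oplus E^c(z)$, whose trivial intersection makes the relevant linearised equation locally invertible) and to locate $x$ in $W^u(p)$ at ambient distance $>\eta$ from the finite pieces of $W^u(p)\cap f^{-N}(W^s_\eta(q))$ lying in a fixed far region of $M$.
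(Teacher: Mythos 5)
Your main route has a fatal gap at the ``scale condition''. You need an $\varepsilon\in(0,\varepsilon_1)$ with $5\varepsilon(\Lambda^{N(\varepsilon)}+1)<\diam(M)/2$, but $N(\varepsilon)$ is dictated by the specification property of $f$ and tends to infinity as $\varepsilon\to 0$, while $\Lambda=\sup_z\|D_zf|_{E^u}\|>1$ because $E^u$ is nontrivial and uniformly expanding. Hence $\varepsilon\,\Lambda^{N(\varepsilon)}$ need not be small for \emph{any} admissible $\varepsilon$: typically $N(\varepsilon)\sim c\log(1/\varepsilon)$, so $\varepsilon\Lambda^{N(\varepsilon)}\sim\varepsilon^{1-c\log\Lambda}$, which diverges when $c\log\Lambda>1$ (this already happens for hyperbolic toral automorphisms). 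The appeal to Gourmelon's adapted metric does not help: it normalizes the constant $C$ in the definition of uniform expansion, i.e.\ a \emph{lower} bound on expansion, and gives no control on $\Lambda$ nor on $N(\varepsilon)$; shrinking $\varepsilon_1$ only removes candidates. More structurally, the idea of confining $f^N(\gamma^u_\eta(x))$ to a small ambient ball cannot work, since $f^N$ expands the leaf and the image disk has leaf-diameter at least $\mu^N\eta$ for some $\mu>1$.

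Your alternative route is much closer to the paper's actual argument (do the avoidance in the domain, i.e.\ choose $x$ so that $\gamma^u_\eta(x)$ misses $f^{-N}(W^s_\eta(q))$), and the tangent-space computation $T_zW^u(p)=E^u(z)$, $T_zW^s(q)\subset E^s(z)\oplus E^c(z)$ is correct; but it only shows that the points of $W^u(p)\cap f^{-N}(W^s_\eta(q))$ are isolated, not that there are finitely many of them. Since $W^u(p)$ is a non-compact immersed leaf which is dense in $M$ (Lemma~\ref{dense}), it can meet the compact disk $f^{-N}(W^s_\eta(q))$ in infinitely many points accumulating on a point that lies outside $W^u(p)$, and an infinite isolated set could a priori be $\eta$-dense in the leaf. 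The paper closes exactly this gap by working in a foliation box $\mathcal{A}(D_0)$ around $p$: the finitely many components $K_i$ of $K\cap f^N(\mathcal{A}(D_0))$ are pushed by holonomy maps $\pi_i$ onto a transversal $D_0$ of dimension complementary to $E^u$, where the bad set $\bigcup_i\gamma_i^s$ is a finite union of positive-codimension submanifolds; one then picks an open subdisk $U\subset D_0$ avoiding it and uses density of $W^u(p)$ to find a plaque $\gamma^u_\eta(x)\subset W^u(p)$ lying over $U$. You would need to supply this compactness-plus-holonomy bookkeeping (or an equivalent) to make the alternative route into a proof.
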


\begin{proof}
Since $E^u$ is not empty, it is well known that the sub-bundle $E^u$ is uniquely integrable
and hence we have a foliation $\mathcal{F}^u$ which are tangent to $E^u$, called the strong 
unstable foliation (see \cite{HPS}). As usual, let us denote by $\cF^u(x)$ the leaf of the foliation $\cF^u$
that contains the point $x$.
Then, Lemma~\ref{dense} guarantees that $W^u(p)$ is dense in $M$. Given $r>0$,
let us consider the family
$$
\mathcal{L}(p)=\{V(w):w\in B(p,r)\},
$$
where $V(w)$ is the connected component of $\mathcal{F}^u(w)\cap B(p,r)$ containing $w$.
Choose a local disk $D'_0$ and $\eta>0$ so small that $D_0'$ is transverse to the family $\mathcal{L}(p)$, $p\in D_0'$,
and for any open disk $U$ contained in $D'_0$,
$
\mathcal{A}(U):=\bigcup_{z\in U}\mathcal{F}^u_{\eta}(z)
$
is homeomorphic to $U\times [-\eta,\eta]^{\dim\ E^u}$.
Here we set
$$
\mathcal{F}^u_{\eta}(z):=\{w\in \mathcal{F}^u(z):d^u(z,w)\le\eta\},
$$
where $d^u$ is the distance in $\mathcal{F}^u(z)$ induced in the
Riemannian metric.
We set $\varepsilon:=\min\{\eta/5,\varepsilon_1/2\}$.

Next, we choose a compact disk $K$ such that $W_{\eta}^s(q)\subset K$ and $K$ is transverse to $E^u$.
Since $K$ is transverse to $E^u$ then $K\cap f^N(\gamma_{\eta}^u(p))$ consists of finitely many points 
$\{x_1,x_2,\cdots,x_k\}$. Choose an open subdisk $D_0\subset D'_0$ containing $p$ such that $K_i\cap K_j=\emptyset$ if $i\not=j$. Here $K_i$ is a connected component of
$K\cap f^N(\mathcal{A}(D_0))$ containing $x_i$, for $1\le i \le k$ (see Figure 1).

\begin{figure}[htbp]
 \begin{center}
  \includegraphics[width=100mm]{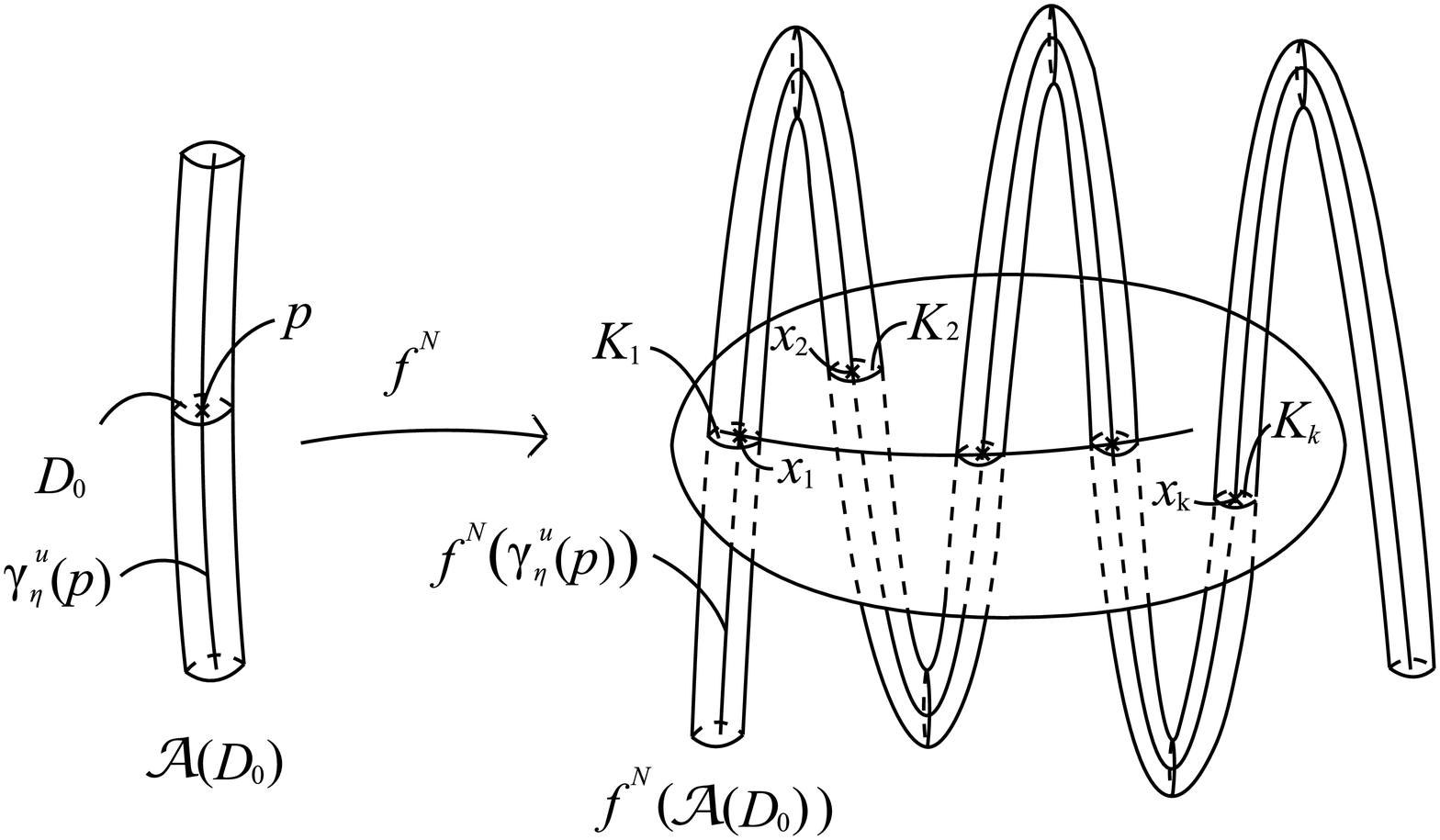}
 \end{center}
 \caption{}
 \label{fig:two}
\end{figure}

For each $1\le i\le k$, we set $D_i=f^{-N}(K_i)$ and consider a holonomy map
$\pi_i\colon D_i\to D_0$ which is defined by
$$\pi_i(w):=v\text{ if }\{w\}=D_i\cap\mathcal{F}^u(v),\ (v\in D_0).$$
By our choice of $D_0,\cdots, D_k$, for each $1\le i\le k$, $\pi_i$ is
a homeomorphism.
Since $W_{\eta}^s(q)$ is a closed submanifold with ${\rm dim}\ W_{\eta}^s(q)
<{\rm dim}\ K_i$, $K_i\setminus W_{\eta}^s(q)$ is open and dense in $K_i$.
Thus, if we set $\gamma_i^s:=\pi_i\circ f^{-N}(W_{\eta}^s(q))$, then
$D_0\setminus (\bigcup_{i=1}^k\gamma_i^s)$ is dense and open
in $D_0$. So we can find an open subdisk $U\subset D_0\setminus(\bigcup_{i=1}^k\gamma_i^s)$
(see Figure 2).

\begin{figure}[htbp]
 \begin{center}
  \includegraphics[width=120mm]{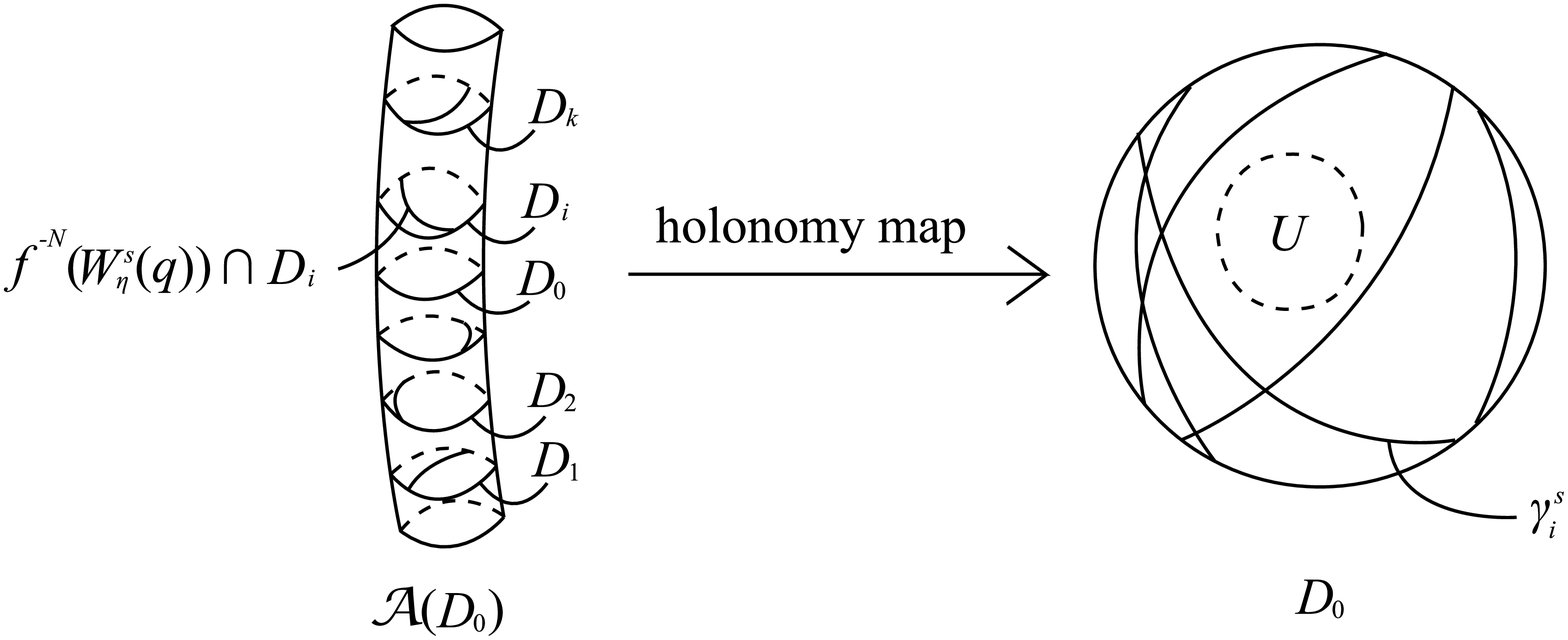}
 \end{center}
 \caption{}
 \label{fig:three}
\end{figure}

Since $\mathcal{A}(U)$ is homeomorphic to $U\times [-\eta,\eta]^{\dim\ E^u}$
and $W^u(p)$ is dense in $M$, we can find a point $z'\in\mathcal{A}(U)\cap W^u(p)$.
This implies that there exists a point $x\in U$ such that $z'\in\mathcal{F}^u_{\eta}(x)$.
So $x\in W^u(p)$ and $\mathcal{F}^u_{\eta}(x)=\gamma_{\eta}^u(x)$.
By the choice of $U$, we have $f^N(\gamma_{\eta}^u(x))\cap W_{\eta}^s(q)=\emptyset$,
which proves the proposition.
\end{proof}

Now we continue the proof of Theorem \ref{main}.
For each $\varepsilon>0$ let $N=N(\varepsilon)\ge 1$ be the integer
as in the definition of the specification property. 
Then it follows from Proposition \ref{hazusu} that there are $\eta>0$ and $\varepsilon\in(0,\varepsilon_1)$ with
$4\varepsilon<\eta$ and a point $x\in W^u(p)$ such that
$$
f^N(\gamma_{\eta}^u(x))\cap W_{\eta}^s(q)=\emptyset.
$$
On the other hand, it follows from the specification property that for any $n\ge 1$ one has
$f^N(B_{-n}(x,\varepsilon))\cap B_n(q,\varepsilon)\not =\emptyset$
and consequently, using the compactness of $M$, we have
$$
\bigcap_{n=1}^{\infty}f^N( B_{-n}(x,\varepsilon)) \cap B_n(q,\varepsilon) \not=\emptyset.
$$
Therefore, there exists a point $z\in M$ such that 
$d(f^{-n}(f^{-N}(z)),f^{-n}(x))\le\varepsilon$ for any $n\ge 0$ and $d(f^n(z),f^n(q))\le\varepsilon$.
Thus it follows from Proposition \ref{hairu} that
$z\in f^N(\gamma_{\eta}^u(x))\cap W_{\eta}^s(q)$, which is a contradiction.
This finishes the proof of Theorem~\ref{main}.

\end{proof}

\section{Proofs of corollaries}

\begin{proof}[Proof of Corollary \ref{corpar}]
It follows from \cite[Theorem 3.1]{AD} that there is an open and dense subset
$\mathcal{P}'$ in $\mathcal{RNT}$ such that
every diffeomorphism in $\mathcal{P}'$ has two saddles with different indices.

We set $\mathcal{P}=\mathcal{P}'\cap\mathcal{SPH}_1(M)$.
Then by the openness of $\mathcal{SPH}_1(M)$, $\mathcal{P}$ is open and dense in $\mathcal{RNT}\cap\mathcal{SPH}_1(M)$.
Let $f\in\mathcal{P}$. Then there are two saddles $p$ and $q$ so that $\dim W^u(p)<\dim W^u(q)$.
Since $\dim E^c=1$, we see that $\dim W^u(p)=\dim E^u$. So, by Theorem \ref{main}, we have Corollary \ref{corpar}.
\end{proof}

\begin{proof}[Proof of Corollary \ref{dim3}]
Let $M$ be a three dimensional closed manifold and $\mathcal{RNT}$ be the set of
robustly non-hyperbolic transitive diffeomorphisms.
Then it follows from \cite[Theorem 3.1]{AD} that there is an open and dense subset
$\mathcal{P}$ in $\mathcal{RNT}$ so that for any $f\in\mathcal{P}$ such that
every diffeomorphism in $\mathcal{P}$ has two saddles with different indices.

Let $f\in\mathcal{P}$. Since $f$ is robustly transitive, it follows from
\cite{DPU} that $f$ has a partially hyperbolic splitting $E^u\oplus E^c\oplus E^s$.
Thus, the existence of two saddles with different indices, together with Theorem \ref{main}
imply Corollary \ref{dim3}.

\end{proof}


\end{document}